\documentclass[11pt]{amsart}
\usepackage{amssymb}
\usepackage{amsmath}
\usepackage{amsthm}
\usepackage{verbatim}
\usepackage[all]{xy}
\usepackage{url}
\usepackage{mathtools}
\usepackage{dsfont}




\numberwithin{equation}{section}

\theoremstyle{plain}
\newtheorem{theorem}[equation]{Theorem}

\newtheorem{proposition}[equation]{Proposition}
\newtheorem{lemma}[equation]{Lemma}
\newtheorem{corollary}[equation]{Corollary}

\theoremstyle{definition}

\newtheorem{examples}[equation]{Examples}
\newtheorem{remark}[equation]{Remark}
\newtheorem{remarks}[equation]{Remarks}
\newtheorem{definition}[equation]{Definition}
\newtheorem{construction}[equation]{Construction}

\newtheorem{subsec}[equation]{}

\newcommand{\C}{{\mathds C}}
\newcommand{\R}{{\mathds R}}

\newcommand{\Z}{{\mathds Z}}
\newcommand{\G}{{\mathbb G}}

\newcommand{\BBB}{{\sf B}}

\newcommand{\DDD}{{\sf D}}
\newcommand{\EEE}{{\sf E}}
\newcommand{\FFF}{{\sf F}}

\newcommand{\cZ}{{\mathcal{Z}}}
\newcommand{\cN}{{\mathcal{N}}}

\newcommand{\into}{\hookrightarrow}

\newcommand{\ov}{\overline}


\DeclareMathOperator{\Aut}{Aut}

\DeclareMathOperator{\Cl}{Cl}

\DeclareMathOperator{\Gal}{Gal}

\DeclareMathOperator{\Hom}{Hom}

\newcommand{\ad}{{\rm ad}}

\newcommand{\Ad}{{\rm Ad}}

\newcommand{\sss}{{\rm ss}}

\newcommand{\isoto}{\overset{\sim}{\to}}
\newcommand{\onto}{\twoheadrightarrow}
\newcommand{\labelto}[1]{\xrightarrow{\makebox[1.5em]{\scriptsize ${#1}$}}}

\newcommand{\hs}{\kern 0.8pt}
\newcommand{\hssh}{\kern 1.2pt}
\newcommand{\hshs}{\kern 1.6pt}
\newcommand{\hssss}{\kern 2.0pt}

\newcommand{\hm}{\kern -0.8pt}
\newcommand{\hmm}{\kern -1.2pt}

\renewcommand{\hbar}{{\bar h}}

\newcommand{\X}{{{\sf X}}}

\newcommand{\id}{{\rm id}}

\newcommand{\sZ}{{\mathcal Z}}

\newcommand{\Tt}{{\mathcal T}}

\newcommand{\kbar}{{\bar k}}

\newcommand{\red}{{\rm red}}

\begin{document}

\title[Quasi-connected reductive groups]%
{Galois cohomology of real quasi-connected\\ reductive groups}

\author{Mikhail Borovoi}
\address{%
Borovoi: Raymond and Beverly Sackler School of Mathematical Sciences,
Tel Aviv University,
6997801 Tel Aviv,
Israel}
\email{borovoi@tauex.tau.ac.il}

\author{Andrei A. Gornitskii}
\address{%
Gornitskii: Moscow Center of Fundamental and Applied Mathematics,
119991 Moscow,
Russian Federation,
and
Raymond and Beverly Sackler School of Mathematical Sciences,
Tel Aviv University,
6997801 Tel Aviv,
Israel}
\email{gnomage@mail.ru}

\author{Zev Rosengarten}
\address{%
Rosengarten: Einstein Institute of Mathematics,
The Hebrew University of Jerusalem,
Edmond J. Safra Campus, 91904, Jerusalem, Israel}
\email{zev.rosengarten@mail.huji.ac.il}

\thanks{Borovoi and Gornitskii were  supported
by the Israel Science Foundation (grant 870/16). Gornitskii
was supported by the Ministry of Education and Science of
the Russian Federation as part of the program of the Moscow Center for
Fundamental and Applied Mathematics under the agreement
No. 075-15-2019-1621.
Rosengarten was supported by a Zuckerman Postdoctoral Scholarship.}

\keywords{Galois cohomology, real algebraic group,
     quasi-connected reductive group, quasi-torus}

\subjclass{Primary: %
11E72
; Secondary:\\%
  20G07
, 20G15
, 20G20
}

\date{\today}

\begin{abstract}
By a quasi-connected reductive group (a term of Labesse)
over an arbitrary field we mean an almost direct product
of a connected semisimple group and a quasi-torus
(a smooth group of multiplicative type).
We show that a linear algebraic group is quasi-connected reductive
if and only if it is isomorphic to a smooth normal subgroup
of a connected reductive group.
We compute the first Galois cohomology set  $H^1(\R,G)$
of a quasi-connected reductive group $G$
over the field $\R$ of real numbers
in terms of a certain action of a subgroup of the Weyl group
on the Galois cohomology of a fundamental quasi-torus of $G$.
\end{abstract}

\maketitle

\section{Introduction}
We denote by $\R$ and $\C$ the fields of real numbers
and of complex numbers, respectively.
Let $G$ be a linear algebraic group over  $\R$
(for brevity we say that $G$ is an $\R$-group).
  We are interested in the first Galois cohomology set
$H^1(\R,G)\coloneqq H^1\big(\text{Gal}(\C/\R), G(\C)\big)$.
In terms of Galois cohomology one can state
answers to many natural questions;
see Serre  \cite[Section III.1]{S} and  Berhuy \cite{Be}.

The Galois cohomology of {\em compact} $\R$-groups was computed
by Borel and Serre \cite[Theorem 6.8 and Example (a), p. 157]{BS},
see also Serre's book \cite[Section III.4.5, Theorem 6 and Example (a)]{S}.
The Galois cohomology of {\em connected}  reductive $\R$-groups
was computed by the first-named author \cite{Bo},
see also \cite{Borovoi-arXiv}.
This result was used by Borovoi and Evenor \cite{BE}
in order to compute explicitly the Galois cohomology
(namely, representatives of all cohomology classes)
for all simply connected absolutely simple $\R$-groups.
Recently this result was used by Borovoi and Timashev \cite{BT}
in order to compute explicitly the Galois cohomology of all semisimple groups,
and by Nair and Prasad in their article \cite{NP}
on cohomological representations of real reductive groups.

In Section \ref{s:arbitrary} of this article
we consider a class of not necessarily connected smooth reductive groups
over an arbitrary field $k$, which following Labesse \cite{La}
we call {\em quasi-connected reductive} $k$-groups.
We say that a $k$-group is quasi-connected reductive if
it is isomorphic to an almost direct product
of a connected semisimple $k$-group and a $k$-quasi-torus
(a smooth $k$-group of multiplicative type).
We show that a $k$-group is quasi-connected reductive
if and only if it is isomorphic to a smooth normal subgroup
of a connected reductive $k$-group;
see Theorems \ref{t:normal-sub} and \ref{t:quasi-normal}.

Many known results on connected reductive groups and their Galois cohomology
can be extended to quasi-connected reductive groups.
In particular, generalizing \cite[Corollary 5.4.1]{Bo-Memoir},
Labesse computed the Galois cohomology
of quasi-connected reductive groups  over $p$-adic fields;
see \cite[Proposition 1.6.7]{La}.
In this article, generalizing \cite[Theorem 1]{Bo}, we prove Theorem \ref{thm:bijectivity}
that computes $H^1(\R,G)$ for a quasi-connected reductive group $G$  over $\R$.

\section{Quasi-connected reductive groups over an arbitrary field}
\label{s:arbitrary}

\begin{subsec}{\bf Notation and conventions.}
In this article $k$ is a field,
$\kbar$ is a fixed algebraic closure of $k$,
and $k_s$ is the separable closure of $k$ in $\kbar$.
We denote by $\Gal(k_s/k)$ the Galois group of $k_s$ over $k$.

By an {\em algebraic $k$-group} (for brevity: a $k$-group)
we mean an {\em affine} group scheme of finite type over $k$, not necessarily smooth.
Let $G$ be a $k$-group.
We write $G_\kbar$ for the base change $G\times_k \kbar$.
We denote by $G(k)$ and $G(\kbar)$ the groups of $k$-points
and of $\kbar$-points of $G$, respectively.
In this section we write $|G|$ for $G(\kbar)$
(because the formula $|Z(G)|$ is easier to read than $Z(G)(\kbar)$\hs).

We say that a $k$-group $H$ is {\em reduced} if it is reduced as a scheme.
Since $H$ is affine, it is reduced if and only if
the corresponding Hopf algebra $k[H]$ has no nontrivial nilpotents.
We mostly work with {\em smooth} $k$-groups.
Note that a $k$-group $H$ is smooth if and only if it is {\em geometrically reduced},
that is, $H_\kbar$ is reduced; see Milne \cite[Proposition 1.26(b)]{Milne-AG}.
By Cartier's theorem, in characteristic 0 all $k$-groups are smooth;
see \cite[Theorem 3.23]{Milne-AG}.
If $k$ is perfect, then a $k$-group is smooth if and only if it is reduced;
see \cite[Proposition 1.26(a)]{Milne-AG}.
Let $H_\red\subseteq H$ denote the reduced subscheme
corresponding to the quotient of $k[H]$ by its nilradical.
We have $H_\red(\kbar)=H(\kbar)$.
If $k$ is perfect, then $H_\red$ is a $k$-subgroup of $H$;
see \cite[Corollary 1.39]{Milne-AG}.
However, over a nonperfect field $k$,
the $k$-subscheme $H_\red$ might not be a $k$-subgroup;
see  \cite[Example 1.57]{Milne-AG}.

Let $G$ be a smooth algebraic $k$-group, not necessarily connected.
We write $G^0$ for the identity component of $G$
and $\pi_0(G)$ for the finite \`etale $k$-group $G/G^0$.
We denote by $R_u(G_\kbar)$ the unipotent radical of $G^0_\kbar$,
that is, the largest smooth connected normal unipotent subgroup of $G^0_\kbar$\hs.
Note that in characteristic 0 all unipotent groups are smooth and connected.
We say that $G$ is {\em reductive} if it is smooth and $R_u(G_\kbar)=\{1\}$.
If $G$ is reductive,  we write $G^\sss$  for the commutator subgroup
$( G^0,\,G^0)$ of the connected reductive $k$-group  $G^0$;
then $G^\sss$ is a connected semisimple $k$-group.
We write $Z(G)$ for the center of $G$; it might be non-smooth.
If $k$ is perfect, then $Z(G)_\red$ is a smooth $k$-subgroup
of $Z(G)$ satisfying $Z(G)_\red(\kbar)=Z(G)(\kbar)$.
\end{subsec}

\begin{subsec}\label{ss:quasi-torus}
Following Gorbatsevich, Onishchik, and Vinberg  \cite[Section 3.3.2]{GOV},
we say that a {\em quasi-torus} over $k$ is a smooth $k$-group of multiplicative type.
An equivalent definition of a quasi-torus is a
``smooth commutative $k$-group consisting of semisimple elements''
(that is, all elements of $Q(\kbar)$ are semisimple);
see \cite[Corollary 12.21]{Milne-AG}.
Then any $k$-torus is a $k$-quasi-torus,
and any smooth $k$-subgroup of a $k$-quasi-torus is a $k$-quasi-torus.
Conversely, any $k$-quasi-torus $Q$ admits an embedding into a $k$-torus.
Indeed, the character group $\X^*(Q_\kbar)$ is a finitely generated $\Gal(k_s/k)$-module
and hence it is a quotient module of a $\Z$-free $\Gal(k_s/k)$-module.
See \cite[Theorem 12.23]{Milne-AG}.
\end{subsec}

\begin{definition}\label{d:qc}
An algebraic group $G$ (not necessarily connected) over a  field $k$
is called {\em quasi-connected reductive} if
\begin{enumerate}
\item[(1)] $G$ is reductive;
\item[(2)] $|G|=|G^\sss|\cdot |Z(G)|$;
\item[(3)] $|Z(G)|$ consists of semisimple elements,
\end{enumerate}
where we write $|G|$ for $G(\kbar)$ and $|Z(G)|$ for $Z(G)(\kbar)$.
\end{definition}

This  definition is equivalent
to the definition of Labesse \cite[Definition 1.3.1]{La};
see Remark \ref{r:Labesse} below.
Note that in Section \ref{s:cohomology} we shall use Definition \ref{d:qc}
rather than the definition of Labesse.

\begin{examples}\label{ex:i-ii-iii}\
\begin{enumerate}
\item[(i)] Any connected reductive  $k$-group is  quasi-connected reductive.
\item[(ii)] Any $k$-quasi-torus is quasi-connected reductive.
\item[(iii)] The product of two quasi-connected reductive $k$-groups
is quasi-connect\-ed reductive.
\end{enumerate}
\end{examples}

\begin{lemma}
A $k$-group $G$ satisfying conditions $(1)$ and $(2)$
of Definition \ref{d:qc} satisfies condition $(3)$
if and only if it satisfies the following condition:
\begin{enumerate}
\item[($3'$)]  $|\pi_0(G)|$ consists of semisimple elements.
\end{enumerate}
\end{lemma}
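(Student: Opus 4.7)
The plan is to deduce each direction from the Jordan decomposition in $|G|$ together with two standard inputs: a morphism of algebraic $\kbar$-groups sends semisimple elements to semisimple elements and unipotent elements to unipotent elements; and the center $Z(G^0)$ of the connected reductive $k$-group $G^0$ is of multiplicative type, so it contains no non-trivial unipotent element. I will also use that the Jordan decomposition is inherited by closed subgroups: since $z_s$ and $z_u$ are polynomials in $z$, they centralize whatever $z$ centralizes, so for any $z \in |Z(G)|$ the components $z_s, z_u$ lie in $|Z(G)|$.

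For the implication $(3) \Rightarrow (3')$, I would choose $\bar g \in |\pi_0(G)|$ and any lift $g \in |G|$. Condition $(2)$ allows me to write $g = h z$ with $h \in |G^\sss|$ and $z \in |Z(G)|$. Because $G^\sss \subseteq G^0$, the image of $h$ in $|\pi_0(G)|$ is trivial, so $\bar g$ equals the image of $z$. By $(3)$ the element $z$ is semisimple, hence so is its image in $|\pi_0(G)|$.

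For the implication $(3') \Rightarrow (3)$, I would start with $z \in |Z(G)|$, write its Jordan decomposition $z = z_s z_u$ (with $z_s, z_u \in |Z(G)|$), and push it forward to $|\pi_0(G)|$: the images $[z_s]$ and $[z_u]$ are commuting semisimple and unipotent elements whose product is $[z]$. Condition $(3')$ says that $[z]$ is semisimple, so the uniqueness of the Jordan decomposition in $|\pi_0(G)|$ forces $[z_u] = 1$. Therefore $z_u \in |G^0| \cap |Z(G)| \subseteq |Z(G^0)|$, and since $Z(G^0)$ is of multiplicative type its only unipotent element is $1$; hence $z_u = 1$ and $z$ is semisimple.

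I do not expect a serious obstacle; the whole argument is formal once the two ingredients above are in hand, and in characteristic $0$ both $(3)$ and $(3')$ are automatically satisfied. The only details that need a word of justification are the descent of the Jordan decomposition to the closed subgroup $Z(G)$ and the inclusion $|Z(G)| \cap |G^0| \subseteq |Z(G^0)|$, both immediate from the definition of the center.
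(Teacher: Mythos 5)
Your proof is correct and takes essentially the same route as the paper: the forward direction is the same lifting observation (equivalently, surjectivity of $|Z(G)|\to|\pi_0(G)|$ given (2)), and the backward direction is the paper's filtration of $|Z(G)|$ by $|Z(G)|\cap|G^0|$, with you making explicit the Jordan-decomposition argument (push $z=z_sz_u$ to $\pi_0(G)$ to kill $[z_u]$, then use that $Z(G^0)$ is of multiplicative type to kill $z_u$) that the paper's short-exact-sequence phrasing leaves implicit.
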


\begin{proof}
By (2) the homomorphism $|Z(G)| \to |\pi_0(G)|$ is surjective.
If (3) holds, then $|Z(G)|$ consists of semisimple elements,
and hence the same is true for $|\pi_0(G)|$, which gives $(3')$.

Conversely, we have a short exact sequence
\begin{equation}\label{e:cap-G0}
 1\to \,|Z(G)| \cap |G^0|\,\to \,|Z(G)| \,\to\,
       |Z(G)| \hs/\hs\big(\hs|Z(G)|\cap |G^0|\hs\big)\to 1.
\end{equation}
By (1) $G^0$ is a connected reductive $k$-group,
and hence $Z(G^0)$ a $k$-group of multiplicative type (not necessarily smooth);
see  \cite[Proposition 21.7]{Milne-AG}.
It follows that $|Z(G^0)|$ consists of semisimple elements
(see also Humphreys \cite[\hs Corollary 26.2A(b)\hs]{Hu}\hs).
The left-hand group in \eqref{e:cap-G0} is a subgroup of $|Z(G^0)|$
and hence consists of semisimple elements.
The right-hand group is a subgroup of $|\pi_0(G)|$. If $(3')$ holds,
then $|\pi_0(G)|$ and hence the right-hand group consist of semisimple elements.
We conclude that $|Z(G)|$ consists of semisimple elements, that is, (3) holds.
\end{proof}

\begin{remarks}
\begin{enumerate}
\item[(i)] In characteristic 0, any finite algebraic group consists of semisimple elements,
which gives $(3')$ and hence (3).
It follows that an algebraic group over a field $k$ of characteristic 0
is quasi-connected reductive if and only if
conditions (1) and (2) of Definition \ref{d:qc} are satisfied.
\item[(ii)]  In characteristic $p>0$, the group $|\pi_0(G)|$ consists of semisimple elements
if and only if its order is prime to $p$.
\end{enumerate}
\end{remarks}

\begin{proposition}\label{p:G-sur-G'}
Let $\varphi\colon G\to G'$ be a surjective (on $\kbar$-points)
 homomorphism of smooth $k$-groups.
If $G$ is quasi-connected reductive, then so is $G'$.
\end{proposition}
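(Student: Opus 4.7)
My plan is to verify each of the three conditions of Definition~\ref{d:qc} for $G'$ by pushing forward along $\varphi$ the corresponding data for $G$. The argument rests on two preliminary observations: $\varphi(G^0) = G'^0$ and $\varphi(G^{\sss}) = G'^{\sss}$. For the first, $\varphi(G^0)$ is a connected closed subgroup of $G'$, hence contained in $G'^0$; conversely, the quotient $G'/\varphi(G^0)$ is a quotient of the finite \'etale group $\pi_0(G)$ via the surjection induced by $\varphi$, so it is itself finite \'etale, forcing $\varphi(G^0) \supseteq G'^0$. Since $G^0$ is connected reductive (by condition~(1) for $G$) and quotients of reductive groups are reductive, $G'^0$ is connected reductive, so $G'$ is reductive; this gives condition~(1). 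The identification $\varphi(G^{\sss}) = G'^{\sss}$ is then immediate from the standard fact that surjective homomorphisms carry commutator subgroups onto commutator subgroups.

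Next I would note the easy fact that $\varphi(Z(G)) \subseteq Z(G')$, which uses crucially the surjectivity of $\varphi$ on $\kbar$-points: for any $z \in |Z(G)|$ and any $g' = \varphi(g) \in |G'|$, we have $\varphi(z)\hs g' = \varphi(zg) = \varphi(gz) = g'\hs\varphi(z)$. Applying $\varphi$ to the equality $|G| = |G^{\sss}| \cdot |Z(G)|$ then gives $|G'| \subseteq |G'^{\sss}| \cdot |Z(G')|$, and the reverse inclusion is automatic, yielding condition~(2).

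For condition~(3), let $z' \in |Z(G')|$. Write $z' = \varphi(g)$ by surjectivity, and then $g = g_1\hs z$ with $g_1 \in |G^{\sss}|$ and $z \in |Z(G)|$ by condition~(2) for $G$. Then $\varphi(z) \in |Z(G')|$, so $\varphi(g_1) = z'\hs\varphi(z)^{-1}$ lies in $|Z(G')| \cap |G'^{\sss}| \subseteq |Z(G'^{\sss})|$. Since $G'^{\sss}$ is connected semisimple, its center is of multiplicative type, so $\varphi(g_1)$ is semisimple; and $\varphi(z)$ is semisimple because $z$ is (condition~(3) for $G$) and the Jordan decomposition is functorial under homomorphisms of algebraic groups. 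Since $\varphi(g_1)$ and $\varphi(z)$ both lie in the abelian group $|Z(G')|$ they commute, and their product $z'$ is therefore semisimple.

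The main subtlety, and the step I expect to require the most care, is the verification of condition~(3): one needs both the observation that $\varphi(g_1)$ automatically lands in the (multiplicative-type) center of $G'^{\sss}$, rather than merely in $G'^{\sss}$ itself, and the functoriality of Jordan decomposition to propagate semisimplicity of $z$ through $\varphi$. The other conditions follow from formal pushforward together with the preliminary identifications $\varphi(G^0) = G'^0$ and $\varphi(G^{\sss}) = G'^{\sss}$.
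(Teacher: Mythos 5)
Your proof is correct, and for conditions (1) and (2) it follows essentially the same line as the paper: establish $\varphi(G^0) = G^{\prime\hs 0}$ and $\varphi(G^{\sss}) = G^{\prime\hs\sss}$, note that surjectivity forces $\varphi$ to carry the center into the center, and push the product decomposition through $\varphi$. Where you diverge is condition (3). The paper invokes the preceding lemma --- the equivalence of (3) with the condition $(3')$ that $|\pi_0(G)|$ consists of semisimple elements --- and then simply observes that $\varphi$ induces a surjection $|\pi_0(G)| \to |\pi_0(G')|$, so semisimplicity of $|\pi_0(G')|$ is immediate; that is the entire argument. You instead verify (3) directly: given $z' \in |Z(G')|$, you lift it to $g \in |G|$, decompose $g = g_1\hs z$ with $g_1 \in |G^{\sss}|$, $z \in |Z(G)|$ using (2) for $G$, and then show that $\varphi(g_1) \in |Z(G')|\cap|G^{\prime\hs\sss}| \subseteq |Z(G^{\prime\hs\sss})|$ is semisimple (the center of a connected semisimple group being of multiplicative type), while $\varphi(z)$ is semisimple by functoriality of the Jordan decomposition, and conclude because $z'$ is a product of commuting semisimple elements. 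This is a valid and self-contained route that avoids the $(3)\Leftrightarrow(3')$ reformulation, at the cost of being longer and needing more ingredients; the paper's detour through $\pi_0$ is shorter because it reduces condition (3) to a statement about a quotient rather than a subgroup.
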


\begin{proof}
We have $\varphi(G^0)=G^{\prime\hs0}$
and hence $G^{\prime\hs0}$ is reductive, which gives (1).
We have $\varphi(G^\sss)=G^{\prime\hs\sss}$.
Since $\varphi$ is surjective, we have $\varphi|Z(G)|\subseteq |Z(G')|$,
where we write $\varphi|Z(G)|$ for $\varphi(|Z(G)|)$.
Thus
\[ |G'|=\varphi|G|=\varphi\hs|G^\sss\hm\cdot\hm Z(G)|=\varphi|G^\sss|\cdot\varphi|Z(G)|
       \subseteq |G^{\prime\hs\sss}|\cdot |Z(G')|,\]
which gives (2).  The homomorphism $\varphi_*\colon |\pi_0(G)|\to|\pi_0(G')|$
is surjective, and hence $|\pi_0(G')|$ consists
of semisimple elements, which gives $(3')$.
\end{proof}

\begin{proposition}
\label{p:zev-center}
Let $G$ be a quasi-connected reductive $k$-group.
Then $Z(G)$ is a $k$-group of multiplicative type.
\end{proposition}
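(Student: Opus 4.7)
The plan is to realize $Z(G)$ as an extension of two groups of multiplicative type and then to invoke that this property is closed under extensions in the category of commutative affine $k$-group schemes. Concretely, I would work with the short exact sequence
\[
1 \to Z(G) \cap G^0 \to Z(G) \to Z(G)/\bigl(Z(G) \cap G^0\bigr) \to 1,
\]
whose right-hand term sits naturally as a closed subgroup scheme of $\pi_0(G) = G/G^0$ via the composition $Z(G) \into G \onto \pi_0(G)$.

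For the kernel, any element of $G^0$ centralizing $G$ also centralizes the subgroup $G^0$, so scheme-theoretically $Z(G) \cap G^0 \subseteq Z(G^0)$. Since $G^0$ is connected reductive, $Z(G^0)$ is of multiplicative type by \cite[Proposition 21.7]{Milne-AG}, and closed subgroup schemes of multiplicative type groups are of multiplicative type. For the quotient, the lemma just proved gives condition $(3')$, namely $|\pi_0(G)|$ consists of semisimple elements; combined with the subsequent remark, in characteristic $p>0$ this forces $|\pi_0(G)|$ to have order prime to $p$. Since $G$ is smooth, $\pi_0(G)$ is \'etale, and hence the closed subgroup scheme $Z(G)/(Z(G)\cap G^0) \into \pi_0(G)$ is a commutative finite \'etale $k$-group of order prime to $\car k$. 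Over $\kbar$ such a group is a product of cyclic groups $\Z/n_i$ with each $n_i$ prime to $p$, each isomorphic to $\mu_{n_i,\kbar}$, so it is of multiplicative type.

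The remaining and main obstacle is the following extension claim: if $M$ is a commutative affine $k$-group scheme fitting in an exact sequence $1 \to M_1 \to M \to M_2 \to 1$ with $M_1$ and $M_2$ of multiplicative type, then $M$ is of multiplicative type. I would prove it directly. Groups of multiplicative type are linearly reductive, and extensions of linearly reductive groups are linearly reductive: given a short exact sequence $0 \to V' \to V \to V'' \to 0$ of $M$-representations, taking $M_1$-invariants is exact, and then taking $M_2$-invariants of the resulting short exact sequence of $M_2$-representations is again exact, yielding exactness of the $M$-invariants. A commutative linearly reductive affine $k$-group is then of multiplicative type, because on a faithful representation each simple subrepresentation is one-dimensional over $\kbar$ by Schur's lemma, placing the image inside a maximal torus of $\GL_n$. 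Applied to the commutative group $Z(G)$, this completes the proof.
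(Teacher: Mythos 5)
Your proof takes the same core decomposition as the paper: the short exact sequence $1 \to Z(G) \cap G^0 \to Z(G) \to Z(G)/(Z(G) \cap G^0) \to 1$, with the same arguments that the kernel is of multiplicative type (via $Z(G^0)$ and \cite[Proposition 21.7]{Milne-AG}) and that the quotient is a finite commutative \'etale group of order prime to $\car k$, hence of multiplicative type. Where you diverge is the final closure-under-extensions step: the paper simply cites \cite[Corollary 12.22]{Milne-AG}, whereas you give a self-contained proof via linear reductivity (multiplicative type $\Rightarrow$ linearly reductive, extensions of linearly reductive are linearly reductive via the two-step invariants argument on the normal subgroup, then commutative $+$ linearly reductive $\Rightarrow$ diagonalizable over $\kbar$). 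Your two-step invariants argument is correct because $M_1$ is normal in $M$, so $V^{M_1}$ carries a natural $M_2 = M/M_1$-action and $(V^{M_1})^{M_2} = V^M$. One small caveat: the appeal to ``Schur's lemma'' to conclude that simple representations of a commutative group scheme over $\kbar$ are one-dimensional deserves a word of care in this setting, since $Z(G)$ (indeed $Z(G) \cap G^0$) can be non-smooth, and the naive argument using $\kbar$-points does not suffice for infinitesimal groups. The correct route is via the Hopf algebra: commutativity of the group makes $\kbar[M]$ cocommutative, every finite-dimensional comodule lives over a finite-dimensional (cocommutative) subcoalgebra $C$, and simple modules over the commutative finite-dimensional algebra $C^*$ are one-dimensional. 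With that adjustment, your argument is complete; the net effect is to replace the paper's black-box citation by a direct representation-theoretic proof, at the cost of some extra length.
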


\begin{proof}
We have a short exact sequence of $k$-groups
\begin{equation}\label{e:Z(G)}
1\to\hs Z(G)\cap G^0\to Z(G)\hs\to\hs Z(G)\hs/\hs\big(Z(G)\cap G^0\big)\hs\to 1.
\end{equation}
Here $Z(G)\cap G^0$ is a central $k$-subgroup of the connected reductive $k$-group $G^0$
and hence  a $k$-group of multiplicative type (not necessarily smooth);
see  \cite[Proposition 21.7]{Milne-AG}.
On the other hand, by (3$'$)  the \`etale $k$-group $\pi_0(G)$ consists of semisimple elements
(that is, $|\pi_0(G)|$ consists of semisimple elements).
It follows that the $k$-subgroup
\[Z(G)/\big(Z(G)\cap G^0\big)\hs\subseteq\hs \pi_0(G)\]
is a commutative \`etale $k$-group consisting of semisimple elements,
and hence a $k$-group of multiplicative type.
Since in \eqref{e:Z(G)} the right-hand group and the left-hand group are of multiplicative type,
we conclude that the commutative $k$-group $Z(G)$
is of multiplicative type; see \cite[Corollary 12.22]{Milne-AG}.
\end{proof}

\begin{proposition}
\label{p:zev-mult}
Let $M$ be a $k$-group of multiplicative type.
Then the $k$-subscheme $M_\red\subseteq M$ is a smooth $k$-subgroup
of multiplicative type, that is, a $k$-quasi-torus.
\end{proposition}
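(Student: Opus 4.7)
The plan is to use the anti-equivalence between $k$-groups of multiplicative type and finitely generated continuous $\Gal(k_s/k)$-modules given by the character functor, and to identify $M_\red$ as the closed $k$-subgroup dual to the maximal quotient of $\X^*(M_{k_s})$ whose torsion is prime to $\Char k$.

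By Cartier's theorem, if $\Char k = 0$ then $M$ is smooth, so $M_\red = M$ and the conclusion is immediate. Assume therefore that $\Char k = p > 0$. Set $\Lambda \coloneqq \X^*(M_{k_s})$, a finitely generated abelian group with a continuous $\Gal(k_s/k)$-action, and let $\Lambda_{p^\infty} \subseteq \Lambda$ be its $p$-primary torsion subgroup. Being characteristic in $\Lambda$, it is Galois-stable, and the quotient $\Lambda' \coloneqq \Lambda/\Lambda_{p^\infty}$ inherits a Galois action and has torsion prime to $p$. Let $M' \subseteq M$ be the closed $k$-subgroup of multiplicative type dual to the surjection $\Lambda \twoheadrightarrow \Lambda'$.

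First I verify that $M'$ is smooth. Writing $\Lambda' \cong \Z^r \oplus \bigoplus_j \Z/n_j\Z$ with $\gcd(n_j, p) = 1$, the coordinate ring $\kbar[\Lambda']$ of $(M')_\kbar$ is a tensor product of a Laurent polynomial ring with finite étale $\kbar$-algebras $\kbar[t]/(t^{n_j}-1)$, the latter reduced since $t^{n_j}-1$ is separable over $\kbar$. Hence $(M')_\kbar$ is reduced, so $M'$ is a smooth $k$-group of multiplicative type, that is, a $k$-quasi-torus.

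To identify $M'$ with $M_\red$, observe first that since $M'$ is reduced, the closed immersion $M' \hookrightarrow M$ factors through $M_\red$. For the reverse inclusion I pass to $k_s$: since $k_s/k$ is a direct limit of finite separable extensions, taking the reduced subscheme commutes with this base change, so $(M_\red)_{k_s} = (M_{k_s})_\red$. Now $M_{k_s}$ is diagonalizable with Hopf algebra $k_s[\Lambda]$, and for any $\chi \in \Lambda_{p^\infty}$ with $p^n\chi = 0$ we have $(\chi - 1)^{p^n} = \chi^{p^n} - 1 = 0$; combined with reducedness of $k_s[\Lambda']$, this shows the nilradical of $k_s[\Lambda]$ is precisely the kernel of $k_s[\Lambda] \twoheadrightarrow k_s[\Lambda']$. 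Hence $(M_{k_s})_\red = (M')_{k_s}$, and since $M'$ and $M_\red$ are closed $k$-subschemes of $M$ agreeing after faithfully flat base change to $k_s$, they coincide. The main point requiring care is the commutation of $(-)_\red$ with base change to $k_s$; this hinges on separability of $k_s/k$ and would fail over a non-separable extension.
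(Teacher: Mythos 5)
Your proof is correct and takes essentially the same approach as the paper: both split off the $p$-power torsion of the character group to obtain a smooth $k$-subgroup of multiplicative type $Q$ (your $M'$) inside $M$, and then identify it with $M_\red$. The only difference is in the final identification: the paper observes that $M/Q$ is infinitesimal and concludes directly, whereas you verify it more explicitly via the Hopf algebra over $k_s$ and commutation of $(-)_\red$ with separable base change — both are valid.
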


\begin{proof}
We may and shall assume that $k$ is a field of positive characteristic $p$.
Let $\X^*(M)$ denote the character group of $M$
regarded as a $\Gal(k_s/k)$-module.
The subgroup $\X^*(M)(p) \subset \X^*(M)$ of $p$-power torsion elements
is a finite $\Gal(k_s/k)$-invariant subgroup.
Set $Y= \X^*(M)/\hs\X^*(M)(p)$.
Then we have a short exact sequence of $\Gal(k_s/k)$-modules
\[0\to \X^*(M)(p)\to \X^*(M)\to Y\to 0\]
and the dual short exact sequence of $k$-groups of multiplicative type
\[1\to Q\to M\to M/Q\to 1,\]
where $Q$ is the $k$-group of multiplicative type with character group $Y$.
In the latter exact sequence, $Q$ is smooth, because $Y$ has no $p$-power torsion.
Moreover, $M/Q$ is an infinitesimal group (it has no nontrivial $\kbar$-points),
because its character group  $\X^*(M)(p)$ is a $p$-power torsion group.
We conclude that $M_\red=Q$.
Therefore, $M_\red$ is a smooth $k$-subgroup of multiplicative type of $M$,
and hence a $k$-quasi-torus.
\end{proof}

\begin{corollary}
\label{c:zev-subgroup}
Let $G$ be a quasi-connected reductive $k$-group.
Then the $k$-subscheme $Z(G)_\red\subseteq Z(G)$ is a smooth $k$-subgroup
of multiplicative type, that is, a $k$-quasi-torus.
\end{corollary}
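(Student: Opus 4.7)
The plan is to combine the two immediately preceding propositions. First I would invoke Proposition \ref{p:zev-center} to conclude that, since $G$ is quasi-connected reductive, the center $Z(G)$ is a $k$-group of multiplicative type (though possibly non-smooth in positive characteristic).

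Next I would apply Proposition \ref{p:zev-mult} with $M=Z(G)$. That proposition tells us precisely that the reduced subscheme $M_\red$ of any $k$-group $M$ of multiplicative type is a smooth $k$-subgroup of multiplicative type, that is, a $k$-quasi-torus. Setting $M=Z(G)$ gives exactly the statement that $Z(G)_\red$ is a $k$-quasi-torus.

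So there is essentially nothing to prove beyond citing these two results in sequence; the corollary is a formal consequence. There is no real obstacle here, as all the work has already been done in Propositions \ref{p:zev-center} and \ref{p:zev-mult}.
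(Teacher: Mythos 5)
Your proposal is correct and matches the paper's own proof exactly: the corollary is obtained by applying Proposition \ref{p:zev-center} to see that $Z(G)$ is of multiplicative type, and then Proposition \ref{p:zev-mult} with $M=Z(G)$. Nothing further is needed.
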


\begin{proof}
The corollary follows from Propositions \ref{p:zev-center} and \ref{p:zev-mult}.
\end{proof}

\begin{theorem}\label{c:quotient}
A $k$-group is quasi-connected reductive if and only if
it is isomorphic to the quotient of the direct product
of a connected reductive $k$-group and a $k$-quasi-torus
by a finite central $k$-subgroup (not necessarily smooth).
\end{theorem}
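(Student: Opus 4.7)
The plan is to prove the two implications separately, relying on the results already developed in this section: Examples~\ref{ex:i-ii-iii}, Proposition~\ref{p:G-sur-G'}, and Corollary~\ref{c:zev-subgroup}.

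For the ``if'' direction, I would observe that by Examples~\ref{ex:i-ii-iii}(i)--(iii) the direct product $H \times Q$ of a connected reductive $k$-group $H$ and a $k$-quasi-torus $Q$ is itself quasi-connected reductive. Given a finite central $k$-subgroup $F \subseteq H \times Q$, the fppf quotient $(H \times Q)/F$ exists and is smooth (smoothness descends along fppf quotients by finite flat normal subgroup schemes), and it is canonically surjected on by $H \times Q$. Proposition~\ref{p:G-sur-G'} then delivers that this quotient is quasi-connected reductive.

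For the ``only if'' direction, given a quasi-connected reductive $G$, I would take the natural candidates $H = G^\sss$ (connected semisimple, hence connected reductive) and $Q = Z(G)_\red$ (a $k$-quasi-torus by Corollary~\ref{c:zev-subgroup}). Since $Q \subseteq Z(G)$, the subgroups $H$ and $Q$ commute, so multiplication defines a $k$-homomorphism $\mu\colon H \times Q \to G$. A short calculation identifies $\ker\mu$ with the antidiagonal image of $H \cap Q$, a central subgroup of the semisimple group $H$ and hence finite; this subgroup is automatically central in $H \times Q$. Provided $\mu$ is surjective, this would realize $G$ as a quotient of the desired form.

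The main obstacle will be establishing the surjectivity of $\mu$ (not merely on $\kbar$-points, but scheme-theoretically). Condition~(2) of Definition~\ref{d:qc}, together with the equality $Q(\kbar)=Z(G)(\kbar)$ from Corollary~\ref{c:zev-subgroup}, gives surjectivity on $\kbar$-points; the passage to scheme-theoretic surjectivity uses the smoothness of $H \times Q$ and $G$, since the scheme-theoretic image is then a smooth closed subgroup of $G$ containing all $\kbar$-points of $G$, and hence equal to $G$. Apart from this point, the remainder of the argument is a direct application of the machinery already in place.
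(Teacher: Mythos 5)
Your proposal is correct and follows essentially the same route as the paper: both directions use the same ingredients (Examples~\ref{ex:i-ii-iii} and Proposition~\ref{p:G-sur-G'} for the ``if'' direction; Corollary~\ref{c:zev-subgroup} and the homomorphism from $G^\sss\times Z(G)_\red$ to $G$ for the ``only if'' direction, with the kernel identified with the finite central group $G^\sss\cap Z(G)_\red$). The only cosmetic differences are that you use $(s,q)\mapsto sq$ where the paper uses $(s,q)\mapsto sq^{-1}$, and you phrase the passage from surjectivity on $\kbar$-points to a faithfully flat surjection via the scheme-theoretic image rather than by citing \cite[Proposition 1.70]{Milne-AG} directly; these are equivalent.
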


\begin{proof}
By Examples \ref{ex:i-ii-iii} and Proposition \ref{p:G-sur-G'},
the quotient of the direct product
of a connected reductive $k$-group and a $k$-quasi-torus
by any normal $k$-subgroup is quasi-connected reductive
(here the quotient is smooth by \cite[Corollary 5.26]{Milne-AG}).

Conversely, assume that $G$ is quasi-connected reductive.
By Corollary \ref{c:zev-subgroup}, the subscheme $Q\coloneqq Z(G)_\red$ of $Z(G)$ is a $k$-quasi-torus.
Consider the homomorphism
\[\varphi\colon G^\sss\times Q\to G,\quad (s,q)\mapsto sq^{-1}\ \
      \text{for}\ s\in G^\sss,\, q\in Q.\]
We have $|Q|=|Z(G)|$.
By property (2), the homomorphism $\varphi$ is surjective on $\kbar$-points,
and hence it is surjective on closed points.
Since $G$ is smooth, $\varphi$ is faithfully flat; see \cite[Proposition 1.70]{Milne-AG}.
It follows that $G$ is isomorphic to $(G^\sss\times Q)/\ker\varphi$;
see \cite[Definition 5.5]{Milne-AG}.
Here the $k$-group $G^\sss$ is connected semisimple, and hence connected reductive,
and $\ker\varphi\cong G^\sss\cap Q$ is a finite central subgroup of $G^\sss\times Q$.
\end{proof}

\begin{theorem}\label{t:normal-sub}
Any smooth normal $k$-subgroup $G$ of a quasi-connected reductive $k$-group $H$
is quasi-connected reductive.
\end{theorem}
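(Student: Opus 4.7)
The plan is to verify the three conditions (1), (2), (3') of Definition \ref{d:qc} for $G$.

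Conditions (1) and (3') are relatively straightforward. For (1), $G^0$ is characteristic in $G$ and hence normal in $H$, so $G^0$ is a smooth connected normal subgroup of the connected reductive group $H^0$; its unipotent radical is then a smooth connected unipotent normal subgroup of $H^0_{\bar k}$, hence trivial, and so $G$ is reductive. For (3'), I use the exact sequence
\[
1 \to (G \cap H^0)/G^0 \to \pi_0(G) \to \pi_0(G)/((G \cap H^0)/G^0) \to 1,
\]
whose quotient embeds into $\pi_0(H)$ (consisting of semisimple elements by (3') for $H$), while the kernel is a finite normal subgroup of the connected group $H^0/G^0$, hence lies in $Z(H^0/G^0)$, a group of multiplicative type.

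The heart of the proof is condition (2), $|G| = |G^{ss}|\cdot|Z(G)|$. Over $\bar k$, since $G^{ss}_{\bar k}$ is a connected semisimple normal subgroup of $H^{ss}_{\bar k}$, it is an almost direct factor, so I can write $H^{ss}_{\bar k} = G^{ss}_{\bar k}\cdot M$, where $M$ is the complementary subproduct of simple factors (equivalently, $M = Z_{H^{ss}}(G^{ss})^0_{\bar k}$); in particular, $M$ commutes with $G^{ss}_{\bar k}$. Given $g \in G(\bar k)$, condition (2) for $H$ gives $g = sz$ with $s \in H^{ss}(\bar k)$, $z \in Z(H)(\bar k)$, and further $s = s_G m$ with $s_G \in G^{ss}(\bar k)$, $m \in M(\bar k)$. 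Setting $\alpha := mz = s_G^{-1}g$, one has $\alpha \in G(\bar k)$, and the plan reduces to showing $m \in Z(H)(\bar k)$: then $\alpha = mz \in Z(H)\cap G \subseteq Z(G)$, giving $g = s_G\alpha \in G^{ss}\cdot Z(G)$.

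The hard part is proving $m \in Z(H)(\bar k)$. Since $\alpha \in G$ is normalized by $H$ and $z$ is central, for every $s' \in H^{ss}(\bar k)$ the commutator $[s', m] = [s', \alpha]$ lies in $G \cap H^{ss}$. Now $(G \cap H^{ss})(\bar k)/G^{ss}(\bar k)$ is finite (since $G^{ss}$ has finite index in $G^0 \cap H^{ss}$, and $G/G^0$ is finite) and normal in the connected semisimple group $L := H^{ss}_{\bar k}/G^{ss}_{\bar k}$, hence central in $L$. Thus $[\bar m, L] \subseteq Z(L)$; since $L$ is perfect, the "second-center-equals-center" principle (the map $y \mapsto [\bar m, y]$ becomes a homomorphism $L \to Z(L)$, factoring through $L^{\mathrm{ab}} = 1$) forces $\bar m \in Z(L)$. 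Pulling back through the surjection $M \to L$ (whose kernel $M \cap G^{ss}$ lies in $Z(M)$, since $M$ commutes with $G^{ss}$) and repeating the same argument inside the semisimple group $M$, one obtains $m \in Z(M) \subseteq Z(H^{ss}) \subseteq Z(H^0) \subseteq Z(H)$. The last inclusion uses that for the quasi-connected reductive group $H$ one has $H = H^0 \cdot Z(H)$, and elements of $Z(H^0)$ are centralized by both $H^0$ and $Z(H)$.
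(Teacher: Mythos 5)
Your proof is correct, but for conditions (2) and (3) it takes a genuinely different route from the paper's. The paper first shows $Z(H^\sss)\subseteq Z(H)$, so that $H^{\ad}=H/Z(H)=H^\sss/Z(H^\sss)$ is semisimple of adjoint type, and then invokes the abstract simplicity of the group of $\kbar$-points of a simple adjoint group: every non-unit normal subgroup of $|H^{\ad}|$ is non-commutative, and every normal algebraic subgroup of $H^{\ad}$ is connected. From this, $\Ad_H|Z(G^0)|$ and $\Ad_H|Z(G)|$ are trivial (giving $|Z(G^0)|,|Z(G)|\subseteq|Z(H)|$, which yields (3) for free), and $\Ad_H G=\Ad_H G^0$ gives $|G|=|G^0|\cdot|Z(G)|$ and hence (2). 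Your argument instead decomposes $H^\sss_{\kbar}$ as an almost direct product $G^\sss_{\kbar}\cdot M$ of complementary quasi-simple factors, writes an element $g\in G(\kbar)$ explicitly as $s_G\cdot mz$, and proves $m$ central in $H$ by two applications of the commutator trick (if $[m,-]$ takes values in the center then it is a homomorphism, hence trivial when the source is perfect), first in $L=H^\sss/G^\sss$ and then in $M$; your argument for $(3')$ is likewise independent of $H^{\ad}$, using the exact sequence of component groups and the fact that the kernel $(G\cap H^0)/G^0$ is a finite normal, hence central, subgroup of the connected reductive group $H^0/G^0$. The paper's approach buys economy: a single reduction to $H^{\ad}$ disposes of (2) and (3) at once with no element-chasing. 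Your approach buys a somewhat more elementary toolkit — it replaces the abstract simplicity theorem for simple adjoint groups with the perfectness of semisimple groups and the ``commutators-in-the-center'' observation — at the cost of a longer, more explicit argument with the decomposition $H^\sss=G^\sss\cdot M$.
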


\begin{proof}
We may and shall assume that $k$ is algebraically closed.
We check conditions (1--3) of Definition \ref{d:qc}.
The unipotent radical $R_u(G^0)$ is a characteristic subgroup of $G$,
that is, it is preserved by all $k$-automorphisms of $G$.
It follows that  $R_u(G^0)$ is normalized by $H(k)$,
and hence by $H$, because $k$ is algebraically closed and   $H$ is smooth.
Thus $R_u(G^0)$ it is a smooth, connected, unipotent, normal subgroup of $H$.
Since $H$ is reductive, we conclude that  $R_u(G^0)=\{1\}$.
It follows that $G$ is reductive as well, which gives (1).

Consider the homomorphism $\Ad_H\colon H\to H^\ad\coloneqq H/Z(H)$;
see \cite[Definition 5.5 and Proposition 5.14]{Milne-AG}
for the definition of a quotient by a not necessarily smooth normal subgroup.
Consider the $k$-subgroup $Z(H^\sss)=\sZ_{H^\sss}\hs H^\sss$ of $H^\sss$,
where $\sZ_{H^\sss}$ denotes the schematic centralizer in $H^\sss$.
Note that $\sZ_{H^\sss}\hs |Z(H)|=H^\sss$, where we write $|Z(H)|$ for $Z(H)(k)$.
We have
\begin{align*}
Z(H^\sss)=\sZ_{H^\sss}\hs H^\sss= &\sZ_{H^\sss}\hs |H^\sss|
                 =\sZ_{H^\sss}\hs|H^\sss|\cap\sZ_{H^\sss}\hs |Z(H)|\\
=&\sZ_{H^\sss}\hs \big(|H^\sss|\cdot |Z(H)|\big) =\sZ_{H^\sss}\hs |H|=\sZ_{H^\sss}\hs H,
\end{align*}
where the equalities $\sZ_{H^\sss}\hs H^\sss= \sZ_{H^\sss}\hs |H^\sss|$
and $\sZ_{H^\sss}\hs |H|=\sZ_{H^\sss}\hs H$
follow from the fact that the $k$-groups $H^\sss$ and $H$ are smooth, and  the equality
$\sZ_{H^\sss}\big(|H^\sss|\cdot |Z(H)|\big) =\sZ_{H^\sss}\hs |H|$
follows from property (2) for $H$ in Definition \ref{d:qc}.
Thus
\[Z(H^\sss)=\sZ_{H^\sss} H\hs\subseteq \sZ_H\hs H=Z(H),\]
whence $H^\ad= H^\sss/Z(H^\sss)$.
It follows that  $H^\ad$ is a semisimple group of adjoint type,
and hence a product of simple algebraic groups of adjoint type.

The group of $k$-points of any simple algebraic group of adjoint type
over an algebraically closed field $k$
is noncommutative and simple as an abstract group;
see \cite[Corollary 29.5]{Hu}.
It follows that all normal algebraic subgroups of $H^\ad$
are connected and all non-unit normal subgroups of $|H^\ad|$ are non-commutative.
Since $\Ad_H\hs |Z(G^0)|$ is a commutative normal subgroup
of  $|H^\ad|$, it is trivial.
Thus $|Z(G^0)|\subseteq |Z(H)|$, whence
\begin{equation}\label{e:ZG0-ZG}
|Z(G^0)|\subseteq |G|\cap |Z(H)|\subseteq |Z(G)|.
\end{equation}
Since  $\Ad_H\hs G$ is a normal subgroup of $H^\ad$, it is connected.
It follows that $\Ad_H\hs G=\Ad_H\hs G^0$, and hence
\begin{equation}\label{e:G0-Zr}
|G|= |G^0|\cdot \big(\hs |G|\hs\cap\hs |Z(H)|\hs\big)=|G^0|\cdot |Z(G)| .
\end{equation}
Since $G^0$ is connected and reductive, we have
\begin{equation}\label{e:Gss-Z(G0)r}
|G^0|=|G^\sss|\cdot |Z(G^0)|.
\end{equation}
From  \eqref{e:G0-Zr}, \eqref{e:Gss-Z(G0)r}, and \eqref{e:ZG0-ZG}, we obtain
\[|G|=|G^0|\cdot  |Z(G)|  =|G^\sss|\cdot |Z(G^0)|\cdot |Z(G)|  =|G^\sss|\cdot |Z(G)|, \]
which gives (2).

Since $\Ad_H\hs |Z(G)|$ is a commutative normal subgroup of  $|H^\ad|$, it is trivial.
Thus $|Z(G)|\subseteq |Z(H)|$.
Since $|Z(H)|$ consists of semisimple elements,
the same holds for the group $|Z(G)|$ as well, which gives (3).
We conclude that $G$ is quasi-connected reductive.
\end{proof}

\begin{theorem}\label{t:quasi-normal}
Any quasi-connected  reductive $k$-group $G$
is isomorphic to the kernel
of a surjective $k$-homomorphism  $H\onto T$
of a connected reductive $k$-group $H$ onto a $k$-torus $T$, and hence $G$
is isomorphic to a smooth normal $k$-subgroup of a connected reductive $k$-group.
\end{theorem}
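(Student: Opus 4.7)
The plan is to build the embedding directly from the structure theorem for quasi-connected reductive groups (Theorem \ref{c:quotient}), combined with the fact from Subsection \ref{ss:quasi-torus} that every $k$-quasi-torus embeds into a $k$-torus.

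By Theorem \ref{c:quotient}, I may write $G\cong (G^\sss\ti Q)/F$, where $Q$ is a $k$-quasi-torus and $F\subseteq G^\sss\ti Q$ is a finite central $k$-subgroup. Using Subsection \ref{ss:quasi-torus}, I choose a closed embedding $Q\hookrightarrow T'$ into a $k$-torus $T'$. Then I form the connected reductive $k$-group $H'\coloneqq G^\sss\ti T'$. Because $Q\subseteq T'$ and $T'$ is commutative, $F$ is still a central $k$-subgroup of $H'$, so I set $H\coloneqq H'/F$. Since $F$ is finite and central, $H$ is a connected reductive $k$-group (quotient of a connected reductive group by a finite central subgroup).

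Next, I claim that the inclusion $G^\sss\ti Q\into G^\sss\ti T'$ induces a closed immersion
\[\iota\colon G\cong (G^\sss\ti Q)/F\into (G^\sss\ti T')/F=H,\]
with normal image, and that the quotient $H/\iota(G)$ is canonically isomorphic to $T'/Q$. Normality is clear since $G^\sss\ti Q$ is normal (in fact the identity component of the inverse image of itself) in $G^\sss\ti T'$: indeed $T'$ is commutative, so $G^\sss\ti Q$ is normal in $G^\sss\ti T'$, and then the quotient becomes
\[H/\iota(G)=\big((G^\sss\ti T')/F\big)\big/\big((G^\sss\ti Q)/F\big)\cong (G^\sss\ti T')/(G^\sss\ti Q)\cong T'/Q.\]

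The remaining step, and the only point requiring a short verification, is to see that $T\coloneqq T'/Q$ is a $k$-torus. It is a $k$-group of multiplicative type (quotient of such by a closed subgroup of multiplicative type) and it is connected (quotient of the connected $k$-scheme $T'$). In terms of character modules, $\X^*(T'/Q)$ is the kernel of the surjection $\X^*(T')\to\X^*(Q)$ of $\Gal(k_s/k)$-modules; since $\X^*(T')$ is $\Z$-free, so is this kernel, and therefore $T'/Q$ is a $k$-torus. Consequently the natural surjective homomorphism $H\onto T$ has kernel $\iota(G)$, which identifies $G$ with the kernel of a surjective $k$-homomorphism from a connected reductive $k$-group onto a $k$-torus. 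The last assertion of the theorem is then immediate, $G$ being isomorphic to $\iota(G)$, a smooth normal $k$-subgroup of the connected reductive $k$-group $H$.
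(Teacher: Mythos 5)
Your proposal is correct and takes essentially the same approach as the paper: decompose $G$ as $(G^\sss\times Q)/F$ via Theorem \ref{c:quotient}, embed the quasi-torus $Q$ into a torus $T'$, and pass to quotients by the finite central subgroup $F$ to realize $G$ as the kernel of $(G^\sss\times T')/F\onto T'/Q$. The only minor difference is that you explicitly verify, via the character-module argument, that $T'/Q$ is a $k$-torus, a point the paper's proof takes for granted.
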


\begin{proof}
By Corollary \ref{c:zev-subgroup},
the $k$-subscheme $Q\coloneqq Z(G)_\red$ of $Z(G)$
is a $k$-quasi-torus.
We choose an embedding $\iota\colon Q\into T'$,
defined over $k$, into a $k$-torus $T'$,
and consider the short exact sequence of $k$-groups
\begin{equation}\label{e:id-iota}
1\to G^\sss\times Q \labelto\alpha  G^\sss\times T'
      \labelto\beta T'/Q  \to 1,
\end{equation}
where $\alpha=\id\!\times\!\iota$ and
$\beta(s,t)=t\cdot Q$ for $s\in G^\sss,\, t\in T'$.
We identify $Q$ with its image in $T'$, and we identify
$G^\sss\times Q$ with its image in $G^\sss\times T'$.
We consider the homomorphism of smooth $k$-groups
$$\varphi\colon G^\sss\times Q  \to G,\quad (s,q)\mapsto s\cdot q^{-1}\ \,
      \text{ for }\, s\in G^\sss,\ q\in Q=Z(G)_\red . $$
Then \hs$\ker\varphi=\big\{\hs(q,q)\hs\mid\hs q\in G^\sss\cap Q \hs\big\}$
\hs is clearly central in $G^\sss\times T'$ (but not necessarily smooth).
By taking quotients by the central $k$-subgroup $\ker\varphi$,
we obtain from \eqref{e:id-iota} a short exact sequence of $k$-groups
\[
1\to \big(G^\sss\times Q \big)/\ker\varphi\labelto{\bar\alpha}
    (G^\sss\times T')/\ker\varphi \labelto{\bar\beta} T'/Q  \to 1;
\]
see \cite[Proposition 5.18]{Milne-AG}.
We see  that $\big( G^\sss\times Q \big)/\ker\varphi$
is the kernel of the surjective (on $\kbar$-points) $k$-homomorphism
\begin{align*}
&\bar\beta\colon (G^\sss\times T')/\ker\varphi \to T'/Q
\end{align*}
from the connected reductive $k$-group $H\coloneqq(G^\sss\times T')/\ker\varphi$
onto the $k$-torus $T\coloneqq T'/Q$.
It remains to observe that by the proof of Corollary \ref{c:quotient}, the quotient
$(G^\sss\times Q)/\ker\varphi$ is isomorphic to $G$.
\end{proof}

\begin{remark}\label{r:Labesse}
Labesse \cite[Definition 1.3.1]{La} defines a quasi-connected reductive group
as the kernel of a surjective homomorphism
of a connected reductive group onto a torus.
By Theorems \ref{t:normal-sub} and \ref{t:quasi-normal},
a reductive $k$-group is quasi-connected in the sense of our Definition \ref{d:qc}
if and only if it is quasi-connected in the sense of Labesse and smooth.
\end{remark}

\section{Galois cohomology over $\R$}
\label{s:cohomology}

\begin{subsec}
From now on $k=\R$ and $\kbar=\C$.
Let $G$ be a an algebraic $\R$-group.
By abuse of notation, in this section we identify $G$ with $G(\C)$
(and do not write $|G|$ for $G(\kbar)=G(\C)$, as in Section \ref{s:arbitrary}).
In particular, $g\in G$ means that $g\in G(\C)$.
We recall the definition of the first
Galois cohomology set $H^1(\R,G)$.
  The set of 1-cocycles is
$$Z^1(\R,G)=\{c\in G\mid c\hs\bar{c}=1\},$$
where the bar denotes  complex conjugation.
  The group $G(\C)$ acts on the right on $Z^1(\R,G)$ by
$c*x=x^{-1}c\hs\bar{x}$,  where $c\in Z^1(\R,G)$ and $x\in G(\C)$.
By definition $H^1(\R,G)= Z^1(\R,G)/G(\C)$.
\end{subsec}

\begin{subsec}
The Galois cohomology of an $\R$-quasi-torus $Q$ is known.
Namely, write $\X^*(Q)=\Hom_\C(Q,\G_{m,\R})$
for the character group of $Q$,
where $\G_{m,\R}$ denotes the multiplicative group over $\R$.
Then $\X^*(Q)$ is a finitely generated abelian group.
The Galois group $\Gal(\C/\R)$ naturally acts on $\X^*(Q)$,
and there is a canonical isomorphism of Tate duality
\[H^1(\R,Q)\isoto \Hom\big( H^1(\Gal(\C/\R),\X^*(Q)\hs),\hs\Z/2\Z\hs\big);\]
see Milne \cite[Theorem I.2.13(b)]{Milne-ADT}.
\end{subsec}

\begin{subsec}
We say that a connected $\R$-group $G$ is \emph{compact}
if its group of real points is compact,
that is, if $G$ is reductive and anisotropic.
In particular, an $\R$-torus is compact if and only if it is anisotropic.
\end{subsec}

\begin{subsec}
From now on, $G$ is a quasi-connected reductive $\R$-group.
We wish to compute the Galois cohomology set $H^1(\R,G)$.
We denote $H=G^\sss$, which is a connected semisimple $\R$-group.
Let $T_0$ be a maximal compact torus in $H$.
Set $T =\cZ_{H}(T_0)$,
where $\cZ_{H}$ denotes the centralizer  in $H$.
Then $T $ is a maximal torus in $H$,
defined over $\R$; see \cite[Section 7]{Borovoi-arXiv}.
Set $Q=\cZ_G(T_0)$.
By Definition \ref{d:qc} we have $G=H\cdot Z(G)$
(that is, $G(\C)=H(\C)\cdot Z(G)(\C)$\hs).
It follows that $Q=\cZ_H(T_0)\cdot Z(G)=T \cdot Z(G) $.
Then $Q$ is an $\R$-quasi-torus in $G$ containing $T_0$,
and it is a maximal quasi-torus in $G$.
We say that $Q$ is a {\em fundamental quasi-torus} in $G$.

We set $N=\cN_G(T)$ and  $N_0=\cN_G(T_0)$,
where  $\cN_G$ denotes the normalizer in $G$.
Since $G=H\cdot Z(G)$, we have $N=\cN_{H}(T)\cdot Z(G)$
and  $N_0=\cN_{H}(T_0)\cdot Z(G)$.
We set $W=N/Q=\cN_{H}(T) /T$ and   $W_0=N_0/Q=\cN_{H}(T_0)/T$;
then $W$ is the Weyl group of $H$ with respect to the maximal torus $T$.
Since $T =\cZ_{H}(T_0)$, we have
$\cN_{H}(T_0)\subseteq \cN_{H}(T)$ and hence $W_0\subseteq W$.

We have a right action of $W_0$ on $T_0$ defined by
$(t,w)\mapsto t^w\coloneqq n^{-1}t\hs n$, where $t\in T_0$, $n\in N_0$,
and $n$ represents $w\in W_0$. This action is defined over $\R$.
We show that $W_0$ acts on $T_0$ effectively.
Indeed, if $w\in W_0$ with a representative $n\in N_0$ acts
trivially on $T_0$, then $n^{-1}t\hs n=t$ for any $t\in T_0$,
hence $n\in Q$ (because the centralizer of $T_0$ in $G$ is $Q$),
and therefore $w=1$.

Note that $N_0$ normalizes both $Q=\cZ_G(T_0)$ and $T=\cZ_H(T_0)$,
and hence $W_0$ acts on $Q$ and on $T$ by conjugation.
\end{subsec}

\begin{lemma} \label{l:W0}
$W_0(\C)=W_0(\R)=W(\R)$.
\end{lemma}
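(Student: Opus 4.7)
The plan is to prove the two equalities $W_0(\C) = W_0(\R)$ and $W_0(\R) = W(\R)$ separately. Since $W_0(\R) \subseteq W_0(\C)$ automatically, and $W_0(\R) \subseteq W(\R)$ because $W_0 \subseteq W$, it will suffice to establish that the Galois action on $W_0(\C)$ is trivial, and that every element of $W(\R)$ already lies in $W_0(\C)$.

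For the first step, I would exploit the faithful $\R$-action of $W_0$ on $T_0$ noted in the preceding discussion to obtain an injection of finite $\R$-group schemes $W_0 \hookrightarrow \Aut(T_0)$, and hence a Galois-equivariant injection
\[
W_0(\C) \hookrightarrow \Aut(T_0)(\C) = \Aut_\Z(\X^*(T_0)),
\]
the Galois action on the right being conjugation by the Galois action on $\X^*(T_0)$. Because $T_0$ is compact (i.e., anisotropic) over $\R$, complex conjugation $\sigma$ acts on $\X^*(T_0)$ by $-\id$. Hence for any $\phi \in \Aut_\Z(\X^*(T_0))$, we have $\sigma\cdot\phi = (-\id)\circ\phi\circ(-\id) = \phi$, so the action on $\Aut(T_0)(\C)$ is trivial, and therefore so is its restriction to the subgroup $W_0(\C)$. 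This gives $W_0(\C) = W_0(\R)$.

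For the second step, take $w \in W(\R)$. The action map $W \times T \to T$ being defined over $\R$, $w$ induces an $\R$-automorphism $c_w$ of $T$. I would then argue that $T_0$ is the unique maximal compact subtorus of $T$: any compact subtorus of $T$ containing $T_0$ is in particular a compact subtorus of $H$ containing $T_0$, and so equals $T_0$ by maximality in $H$. Since every $\R$-automorphism of $T$ preserves its maximal compact subtorus, $c_w$ preserves $T_0$. For any lift $n \in \cN_H(T)(\C)$ of $w$, this gives $nT_0n^{-1} = T_0$, so $n \in \cN_H(T_0)(\C) \subseteq N_0(\C)$, and thus $w \in W_0(\C)$.

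Combining the two steps yields $W(\R) \subseteq W_0(\C) = W_0(\R) \subseteq W(\R)$, and all three sets coincide. The main delicate point is the identification of $\Aut(T_0)(\C)$ with $\Aut_\Z(\X^*(T_0))$ as a Galois set together with the fact that anisotropy trivializes the induced conjugation action; the remainder is routine bookkeeping with centralizers and normalizers.
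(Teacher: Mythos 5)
Your proof is correct and follows essentially the same two-step approach as the paper: first show Galois acts trivially on $W_0(\C)$ via the faithful action on the compact torus $T_0$, then show $W(\R) \subseteq W_0$ by observing that a real Weyl element must preserve the maximal compact subtorus of $T$. Your first step simply unpacks the paper's statement that all complex automorphisms of $T_0$ are defined over $\R$ by observing explicitly that Galois acts by $-\id$ on $\X^*(T_0)$, hence trivially by conjugation on $\Aut_\Z(\X^*(T_0))$.
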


\begin{proof}
The group $W_0$  acts on $T_0$ effectively
and hence embeds into $\Aut_\C(T_0)$.
Since  $T_0$ is a compact torus,
all complex automorphisms of $T_0$
are defined over $\R$.
We see that the complex conjugation  acts trivially on $\Aut_\C(T_0)$,
and hence on $W_0$.
Thus  indeed $W_0(\C)=W_0(\R)$.

Since $W_0\subseteq W$, we have $W_0(\R)\subseteq W(\R)$.
Conversely, let $w\in W(\R)$.
Then $w$, when acting by conjugation on $T_\C$, preserves
the real structure of $T$, and hence it preserves
the maximal compact subtorus $T_0$ of $T$, that is $w\in W_0(\R)$.
Thus $W(\R)=W_0(\R)$, which completes the proof of the lemma.
\end{proof}

\begin{remark}
The group $W_0$ is isomorphic to the Weyl group $W(\ov R)$
of the  restricted root system $\ov R=R(H_\C,T_{0\C})$ (not necessarily reduced);
see \cite[Proposition 7.11(iii)]{BT}.
If $H$ is an  inner form of a compact $\R$-group, then $T_0=T$ and $W_0=W$.
If $H$ is an {\em outer} form of a {\em simple} compact $\R$-group,
then the type of the root system $\ov R$ is given in the table
in Gorbatsevich, Onishchik, and Vinberg  \cite[Section 3.3.9, page 119]{GOV}.
For example, if $R$ is of the type $\DDD_\ell$  ($\ell\ge 4$), then $\ov R$ is of the type $\BBB_{\ell-1}$,
and if $R$ is of the type $\EEE_6$, then $\ov R$ is of the type $\FFF_4$.
\end{remark}

\begin{construction}
We define a right action of $W_0(\R)=W_0(\C)$ on $H^1(\R,Q)$.
Let $c\in Z^1(\R,Q)$, $c$ represents $\xi\in H^1(\R,Q)$.
Let  $n\in N_0$, $n$ represents $w\in W_0$.
We set
\begin{equation*}
\xi * w=\Cl(n^{-1}c\hs\bar{n}),
\end{equation*}
where $\Cl$ denotes the cohomology class.

We show that $*$ is a well-defined action.
  First, since  $c\in Q$ and $N_0$ normalizes $Q$, we see
that $n^{-1}c\hs n\in Q$.
  Now $w\in W_0(\R)$, whence $w^{-1}\hs\overline{w}=1$ and
$n^{-1}\bar{n}\in Q$.
  It follows that $n^{-1}c\hs \bar{n}=n^{-1}c\hs n\cdot n^{-1}\bar{n}\in Q$.
We have
\begin{equation*}
n^{-1}c\hs\bar{n}\cdot \overline{n^{-1}c\hs\bar{n}}=
   n^{-1}c\hs\bar{n}\hs\bar{n}^{-1}\bar{c}\hs n
   =n^{-1}c\hs\bar{c}\hs n=n^{-1}n=1,
\end{equation*}
because $c\hs\bar{c}=1$.
  Thus $n^{-1}c\hs \bar{n}\in Z^1(\R, Q)$.
   If $c'\in Z^1(\R,Q)$ is another representative of $\xi$,
then $c'=q^{-1}c\hs \bar{q}$ for some $q\in Q$, and
\begin{equation*}
n^{-1}c'\hs\bar{n}=n^{-1}q^{-1}c\hs\bar{q}\hs\bar{n}
      =(n^{-1}q\hs n)^{-1}\cdot n^{-1}c\hs\bar{n}\cdot \overline{n^{-1}q\hs n}
      =(q')^{-1}\cdot n^{-1}c\hs\bar{n}\cdot\overline{q'},
\end{equation*}
where $q'=n^{-1}q\hs n\in Q$.
   We see that the cocycle $n^{-1}c'\hs\bar{n}\in Z^1(\R,Q)$
is cohomologous to $n^{-1}c\hs\bar{n}$.
   If $n'$ is another representative of $w$ in $N_0$, then
$n'=n\hs q$ for some $q\in Q$, and
  $(n')^{-1}c\hs\overline{n'}=q^{-1}\cdot n^{-1}c\hs\bar{n}\cdot\bar{q}$.
  We see that $(n')^{-1}c\hs\overline{n'}$ is cohomologous to
$n^{-1}c\hs\bar{n}$.
Thus $*$ is indeed a well-defined action
of the finite group $W_0$ on the set $H^1(\R,Q)$.

Note that in general $1*w=\Cl(n^{-1}\bar{n})\neq 1$, and therefore
the action $*$  does not respect the group structure in $H^1(\R,Q)$.

Let $\xi\in H^1(\R,Q)$ and $w\in W_0$.
It follows from the definition of the action\, $*$\, that the images of
$\xi$ and $\xi * w$ in $H^1(\R,G)$ are equal.
We see that the map $H^1(\R,Q)\to H^1(\R,G)$ induces a map
$H^1(\R,Q)/W_0\to H^1(\R,G)$.
\end{construction}

\begin{theorem}\label{thm:bijectivity}
  Let $G$, $H$, $T_0 $, $Q$, and $W_0$ be as above,
in particular, $Q$ be a fundamental quasi-torus
in a quasi-connected reductive $\R$-group $G$.
Then the map
$$H^1(\R,Q)/W_0\to H^1(\R,G)$$
induced by the map $H^1(\R,Q)\to H^1(\R,G)$ is  bijective.
\end{theorem}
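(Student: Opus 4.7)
The plan is to reduce Theorem~\ref{thm:bijectivity} to Borovoi's theorem for connected reductive $\R$-groups (the bijection $H^1(\R,T)/W(\R)\isoto H^1(\R,G^0)$ for a fundamental torus $T$ of a connected reductive $G^0$, from \cite{Bo}), by realizing $G$ as a smooth normal $\R$-subgroup of a connected reductive $\R$-group $\wt{H}$ via Theorem~\ref{t:quasi-normal}, arranged so that $Q$ extends to a fundamental torus of $\wt{H}$.

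Following the proof of Theorem~\ref{t:quasi-normal}, embed $Q$ into an $\R$-torus $T'$ and set $\wt{H}:=(H\ti T')/\ker\varphi$, where $\varphi\colon H\ti Q\to G$, $(s,q)\mapsto sq^{-1}$. One obtains a short exact sequence $1\to G\to \wt{H}\to S\to 1$ with $S:=T'/Q$ an $\R$-torus. Put $\wt{T}:=(T\ti T')/\ker\varphi\subseteq \wt{H}$, where $T=\cZ_H(T_0)$. Since $T'\subseteq Z(\wt{H})^0$ is central, direct verification shows: $\wt{T}$ is a maximal $\R$-torus of $\wt{H}$ whose maximal compact subtorus contains $T_0$, so $\wt{T}$ is a fundamental torus of $\wt{H}$; $\wt{T}\cap G=Q$ and $\wt{T}/Q=S$; and the Weyl group $W(\wt{H},\wt{T})$ equals $W=W(H,T)$. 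By Lemma~\ref{l:W0}, $W(\R)=W_0(\R)$, and since the conjugation action of $W$ on $Q=T\cdot Z(G)$ is nontrivial only on the factor $T$ (acting trivially on $Z(G)$), the $*$-action of $W_0(\R)$ on $H^1(\R,Q)$ defined in Section~\ref{s:cohomology} coincides with the restriction of Borovoi's twisted Weyl action of $W(\R)$ on $H^1(\R,\wt{T})$ under the natural map $H^1(\R,Q)\to H^1(\R,\wt{T})$.

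The two short exact sequences give a commutative ladder of Galois cohomology
\[
\xymatrix{
S(\R)\ar[r]^{\delta_Q}\ar@{=}[d] & H^1(\R,Q)\ar[r]\ar[d] & H^1(\R,\wt{T})\ar[r]\ar[d] & H^1(\R,S)\ar@{=}[d]\\
S(\R)\ar[r]^{\delta_G} & H^1(\R,G)\ar[r] & H^1(\R,\wt{H})\ar[r] & H^1(\R,S)
}
\]
with exact rows (the top row of abelian groups, the bottom of pointed sets) and $W_0(\R)$-equivariant central vertical arrows. Borovoi's theorem applied to $\wt{H}$ gives $H^1(\R,\wt{T})/W(\R)\isoto H^1(\R,\wt{H})$. \emph{Surjectivity}: given $\xi\in H^1(\R,G)$, lift its image in $H^1(\R,\wt{H})$ to $\eta\in H^1(\R,\wt{T})$ by Borovoi; since $\eta$ dies in $H^1(\R,S)$, it lifts to some $\zeta\in H^1(\R,Q)$; the image of $\zeta$ in $H^1(\R,G)$ equals $\xi$ up to the $\delta_G$-action of $S(\R)$, which can be absorbed by modifying $\zeta$ within its $\delta_Q$-coset. \emph{Injectivity}: if $\zeta_1,\zeta_2\in H^1(\R,Q)$ have the same image in $H^1(\R,G)$, then by Borovoi applied to their images in $H^1(\R,\wt{T})$ there is $w\in W_0(\R)$ with $\zeta_1*w$ and $\zeta_2$ having the same image in $H^1(\R,\wt{T})$; so $\zeta_2=\delta_Q(s)\cdot(\zeta_1*w)$ for some $s\in S(\R)$; a diagram chase through the ladder then shows $\delta_Q(s)$ is trivial, hence $\zeta_2=\zeta_1*w$ in $H^1(\R,Q)$.

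The main obstacle is the injectivity step, where one must compare the abelian $S(\R)$-action on $H^1(\R,Q)$ via $\delta_Q$ with the non-group $S(\R)$-twisting action on $H^1(\R,G)$ via $\delta_G$. Concretely, if $\alpha\in H^1(\R,Q)$ and $\delta_Q(s)\cdot\alpha$ have the same image in $H^1(\R,G)$, one needs $\delta_Q(s)$ to be trivial or absorbable by $W_0$; this requires exploiting the factorization $\delta_G=(H^1(\R,Q)\to H^1(\R,G))\circ\delta_Q$ together with the Tate-duality description of the abelian cohomology of the quasi-torus $Q$ recalled at the start of Section~\ref{s:cohomology}.
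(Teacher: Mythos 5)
Your approach is genuinely different from the paper's: you reduce to Borovoi's theorem for a connected reductive overgroup $\wt{H}$ of $G$ supplied by Theorem~\ref{t:quasi-normal}, whereas the paper works directly with $G$ (surjectivity via the map $\Ad\colon G\to G/Z(G)=H^\ad$, where no $\delta$-terms appear because the target has trivial center; injectivity via a twisting argument and conjugacy of maximal compact tori inside ${}_cH$). The setup of your reduction is sound: with $Z:=Z(G)_{\red}$ embedded in a torus $T'$, $\wt{H}=(H\times T')/\ker\varphi$, and $\wt{T}=(T\times T')/\ker\varphi$, one indeed checks $\wt{T}$ is a fundamental torus of $\wt{H}$, $\wt{T}\cap G=Q$, $N_{\wt{H}}(\wt{T}_0)=N_0\cdot T'$, and the twisted Weyl actions are compatible via $W_0=N_{\wt{H}}(\wt{T}_0)/\wt{T}$. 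Your surjectivity step also works, with some care about twisting: after twisting by a cocycle $c$ for $\zeta$, the fiber of $H^1(\R,G)\to H^1(\R,\wt{H})$ through $\xi$ is governed by the twisted connecting map $\delta_{{}_cG}$, and because $c\in Q\subseteq\wt{T}$ acts trivially on $\wt{T}$ one has $\delta_{{}_cG}=(H^1(\R,Q)\to H^1(\R,{}_cG))\circ\delta_Q$; this lets you absorb the discrepancy by adding a $\delta_Q$-class to $\zeta$.

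The gap is in the injectivity step, and it is a real gap, not just a detail to tidy up. From Borovoi's theorem for $\wt{H}$ you get $w\in W_0(\R)$ with $\zeta_2$ and $\zeta_1*w$ having the same image in $H^1(\R,\wt{T})$, hence $\zeta_2=\delta_Q(s)+(\zeta_1*w)$ for some $s\in S(\R)$. You assert that a diagram chase then forces $\delta_Q(s)=0$, but no such chase exists. All the ladder tells you is that $\zeta_2$ and $\zeta_1*w$ have the same image in $H^1(\R,G)$ (which you already knew) and differ by an element of $\mathrm{im}(\delta_Q)$. Concluding $\delta_Q(s)=0$ would say that on each $\mathrm{im}(\delta_Q)$-coset the map $H^1(\R,Q)\to H^1(\R,G)$ is injective — but that map is precisely the one whose fibers the theorem is describing, and it is not injective in general. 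Put differently: Borovoi for $\wt{H}$ hands you \emph{some} $w$ matching $\zeta_1,\zeta_2$ in $H^1(\R,\wt{T})$, while the theorem needs a possibly \emph{different} $w'$ matching them already in $H^1(\R,Q)$; the two need not coincide, and $\delta_Q(s)=(\zeta_1*w')-(\zeta_1*w)$ can be nonzero. You flag this yourself at the end, but the suggested fix (Tate duality for $Q$) doesn't address it either: the obstruction lives in the non-injectivity of $H^1(\R,Q)\to H^1(\R,{}_cG)$ for the non-abelian ${}_cG$, not in the abelian cohomology of $Q$.

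The paper sidesteps this entirely. Its injectivity proof starts from the stronger hypothesis $c'=x^{-1}c\,\bar x$ with $x\in G$ (not merely in $\wt{H}$), twists by $c$, and observes that $T_0$ and $x^{-1}T_0\,x$ are maximal compact tori of ${}_cH$; conjugacy under ${}_cH(\R)$ gives $h$ with $n=xh\in N_0$ and $c'=n^{-1}c\,\bar n$. The crucial point is that the whole argument stays inside $G$ because $H=G^{\sss}\subseteq G$. If you wanted to push your reduction through, you would have to re-run Borovoi's injectivity argument for $\wt{H}$ while keeping track that the cohomologous-making element $x$ already lies in $G$ and that the torus-conjugating element $h$ can be chosen in ${}_cH(\R)\subseteq{}_cG(\R)$ — at which point you are no longer using Borovoi's theorem as a black box but essentially reproducing the paper's direct proof.
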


\begin{proof}
We prove the surjectivity.
Since $G$ is quasi-connected reductive, we have
$G=H\cdot Z(G) $.
It follows that $Z(H) =H\cap Z(G) $ and that the
inclusion homomorphism $H\into G$ induces an isomorphism $H/Z(H) \isoto G/Z(G) $.
We consider  the surjective homomorphism
$\Ad\colon G \to G/Z(G) $
and its restriction to $H$
$$\Ad_H\colon\hs H\,\to\, G/Z(G) =H/Z(H) ,$$
which is surjective as well.
We write $H^\ad=H/Z(H) $, which is a semisimple $\R$-group of adjoint type.

Let $\Tt(H)$ and $\Tt(H^\ad)$ denote the sets of all $\R$-tori
in $H$ and $H^\ad$, respectively.
Consider the mutually inverse, inclusion-preserving bijective maps
\[\Tt(H)\to \Tt(H^\ad),\quad S\mapsto S_a\coloneqq\Ad_H(S)\
    \text{ for }S\in\Tt(H)\]
and
\[\Tt(H^\ad)\to \Tt(H),\quad S_a\mapsto
S\coloneqq(\hs\Ad_H^{-1}(S_a)\hs)^0 \ \text{ for }S_a\in\Tt(H^\ad).\]
The tori $S\subseteq H$ and
$\Ad_H(S)\subseteq H^\ad$ are isogenous,
and therefore, if one of them is compact (anisotropic), then so is the other.
We conclude that $S\subseteq H$ is a maximal compact torus
in $H$ if and only if
$\Ad_H(S)$ is a maximal compact torus in $H^\ad$.

Set $T_0^\ad=\Ad_H(T_0)$,
which is a maximal compact torus in $H^\ad$.
Set  $T^\ad =\cZ_{H^\ad}(T_0^\ad)$.
By \cite[Section 7]{Borovoi-arXiv}, $T^\ad$
is a maximal  torus in $H^\ad$,
and hence a fundamental torus
(a maximal torus containing a maximal compact torus).
The inverse image  $\Ad_H^{-1}(T^\ad)\subseteq H$
is a maximal torus containing $T_0$
and hence coincides with $T$.

We show that $\Ad^{-1}(T^\ad)=Q$.
Since $Q=T\cdot Z(G) $, we have $\Ad(Q)\subseteq T^\ad$
and hence $Q\subseteq\Ad^{-1}(T^\ad)$.
Conversely, let $y\in\Ad^{-1}(T^\ad)$.
We write $y=s\cdot z$ with $s\in H, z\in Z(G) $.
We have $\Ad(s)=\Ad(y)\in T^\ad$, whence $s\in \Ad_H^{-1}(T^\ad)=T$
and $y\in T\cdot Z(G) =Q$, as required.

Let $c\in Z^1(\R,G)$; then $\Ad(c)\in Z^1(\R,H^\ad)$.
By Kottwitz \cite[Lemma 10.2]{Ko}, or \cite[Theorem 1]{Bo},
or \cite[Theorem 9]{Borovoi-arXiv},
the natural map $H^1(\R,T^\ad)\to H^1(\R,H^\ad)$ is surjective.
Therefore, the cocycle $\Ad(c)$ is cohomologous to a cocycle in $T^\ad$,
that is, $a^{-1}\Ad(c)\hs\bar a\in T^\ad$ for some $a\in H^\ad$.
We lift the element  $a$ to an element $g\in G$. Then
$$\Ad(g^{-1}c\hs\bar g)=a^{-1}\Ad(c)\hs\bar a\in T^\ad,$$
and therefore $g^{-1}c\hs\bar g\in \Ad^{-1}(T^\ad)=Q$, that is,
$c$ is cohomologous to a cocycle in $Q$.
This proves the surjectivity in Theorem \ref{thm:bijectivity}.

We prove the injectivity in  Theorem \ref{thm:bijectivity}.
Let $c,c'\in Q$ be two cocycles, that is,
$c\hs\bar{c}=1$, $c'\,\ov{c'}=1$,
and assume that $c=x^{-1}c'\hs\bar{x}$, where $x\in G$.
We shall show that $c=n^{-1}c'\hs\bar{n}$ for some $n\in N_0$.

Consider the antilinear automorphism $\nu$ of $G$ that sends $g\in G$ to
$\nu(g)=c\hs\bar{g}\hs c^{-1}$.
Since $c$ is a cocycle, we have $\nu^2=\id$,
and in this way we obtain a twisted form $_c G$ of $G$.
 Clearly, we have $\nu(H)=H$, and thus
$\nu$ defines a twisted form $_c H$ of $H$.
 Since $c\in Q=\cZ_G(T_0)=\cZ_G(T)$, the embeddings
 of  $T_0$ and of $T$
into ${}_c H$ are defined over $\R$.
 We denote the corresponding $\R$-subgroups of $_c H$
again by $T_0$ and $T$, respectively.
 The compact torus $T_0$ of $_c H$ is contained
in some maximal compact torus $S$ of $_c H$,
and clearly $S$ is contained in the centralizer
$T$ of $T_0$ in $_c H$.
 Since $T_0$ is the largest compact subtorus of $T$,
we conclude that $S=T_0$.
 Thus $T_0 $ is a maximal compact torus in $_c H$.

Consider the embedding
\[i_x\colon\hs  T_0 \hs\into({}_c G)_\C,\quad
     t\mapsto x^{-1} t\hs x\ \text{ for }t\in T_0.\]
Then $i_x(T_0)\subseteq H$.
We have $\nu(\hs i_x(t)\hs)=c\hs\bar{x}^{-1}\bar{t}\hs\bar{x}\hs c^{-1}$.
Since $c\hs \bar{x}^{-1}=x^{-1}c'$, we obtain
\begin{equation*}
c\hs\bar{x}^{-1}\bar{t}\hs\bar{x}\hs c^{-1}=x^{-1}c'\hs\bar{t}\hs c^{\prime\,-1}x
=x^{-1}\bar{t}\hs x =i_x(\bar{t}\hs),
\end{equation*}
because $c'\in Q=\cZ_G(T_0)$.
We see that $\nu(\hs i_x(t)\hs)=i_x(\bar{t}\hs)$, and
hence the embedding $i_x$ is defined over $\R$.
  Set $T_0'=i_x(T_0)$.
We have $T_0'= x^{-1} T_0\hs x$.
The $\R$-torus $T_0'\subseteq {}_c H$ is compact,
and $\dim T_0'=\dim T_0$.
By \cite[Lemma 6]{Borovoi-arXiv},  $T_0'$
is conjugate to $T_0$ under ${}_c H(\R)$,
that is, $T_0 =h^{-1}\hs T_0' \hs h$
for some $h\in{}_c H(\R)$.
Set $n=xh$. Then
$$n^{-1}T_0 \,n=h^{-1}x^{-1}T_0\hs  x\hs h
     =h^{-1}\hs T_0' \,h=T_0 \hs,$$
whence $n\in \cN_{G}(T_0)=N_0$.
  The condition $h\in{}_c H
(\R)$ means that $c\hs\bar{h}\hs c^{-1}=h$, or
$h^{-1}c\hs\bar{h}=c$.
  It follows that
\begin{equation*}
n^{-1}c'\hs\bar{n}=h^{-1}x^{-1}c'\hs\bar{x}\hs\bar{h}=h^{-1}c\hs\bar{h}=c.
\end{equation*}
We have showed that there exists $n\in N_0$ such that
$c=n^{-1}c'\hs\bar{n}$, and hence  the cohomology classes
$\Cl(c), \Cl(c')\in H^1(\R,Q)$ lie in the same orbit of $W_0$.
This proves the injectivity and completes
the proof of Theorem \ref{thm:bijectivity}.
\end{proof}

\begin{remark}
If $G$ is a {\em connected, compact} $\R$-group,
then Theorem \ref{thm:bijectivity}
says that $H^1(\R,G)=T(\R)_2/W$, where $T$ is a maximal torus in $G$,
$T(\R)_2$ is the group of elements of order dividing 2 in $T(\R)$,
and $W=W(G_\C\hs, T_\C)$ is the Weyl group with the usual conjugation action on $T$.
This was earlier proved by Borel and Serre \cite{BS}.
If $G$ is a {\em connected} reductive $\R$-group, not necessarily compact,
then Theorem \ref{thm:bijectivity} says that $H^1(\R,G)=H^1(\R,T)/W_0$,
where $T$ is a fundamental torus in $G$.
This was earlier proved in \cite{Bo}.
\end{remark}

\subsection*{Acknowledgements}
The authors are very grateful to N.\,Q. Th\v{a}\'ng for the reference to Labesse \cite{La}.
We thank B.\,\`E. Kunyavski\u{\i} and  J.\,S. Milne for  helpful comments.

\end{document}